\begin{document}

\begin{frontmatter}



\newtheorem{proposition}{Proposition}[section]
\newtheorem{theorem}{Theorem}[section]
\newtheorem{definition}{Definition}[section]
\newtheorem{lemma}{Lemma}[section]
\newtheorem{remark}{Remark}[section]
\newtheorem{assumption}{Assumption}[section]
\newtheorem{example}{Example}[section]
\newtheorem{corollary}{Corollary}[section]

\title{Last-iterate convergence analysis of stochastic momentum methods for
neural networks\tnoteref{t1}}

\tnotetext[t1]{This work was funded in part by the National Natural Science Foundation of
China (No. 62176051), in part by National Key R\&D Program of China (No. 2020YFA0714102), and in part by the Fundamental Research Funds for the
Central Universities of China. (No. 2412020FZ024).}
\author[nenu]{Dongpo Xu\fnref{label2}}
\ead{xudp100@nenu.edu.cn}
\fntext[label2]{The authors contributed equally to this work.}
\author[nenu]{Jinlan Liu\fnref{label2}}
\author[chsc]{Yinghua Lu\corref{cor1}}
\ead{luyh@nenu.edu.cn}
\author[kasnenu]{Jun Kong\corref{cor1}}
\ead{kongjun@nenu.edu.cn}
\author[danilo]{Danilo P. Mandic}
\ead{d.mandic@imperial.ac.uk}

\cortext[cor1]{Corresponding authors}
\address[nenu]{School of Mathematics and Statistics, Northeast Normal University, Changchun 130024, China}
\address[chsc]{Institute for Intelligent Elderly Care, Changchun Humanities and Sciences College,
Changchun 130117, China}
\address[kasnenu]{Key Laboratory of Applied Statistics of MOE, Northeast Normal University, Changchun 130024, China}
\address[danilo]{Department of Electrical and Electronic Engineering, Imperial College London, SW7 2AZ London, UK}

\begin{abstract}
The stochastic momentum method is a commonly used acceleration technique for solving large-scale stochastic optimization problems in artificial neural networks. Current convergence results of stochastic momentum methods under non-convex stochastic settings mostly discuss convergence in terms of the random output and minimum output. To this end, we address the convergence of
the last iterate output (called \textit{last-iterate convergence}) of the stochastic momentum methods for non-convex stochastic optimization problems,
in a way conformal with traditional optimization
theory. We prove the last-iterate convergence of the stochastic momentum methods under a unified framework, covering
both stochastic heavy ball momentum and stochastic Nesterov accelerated gradient momentum. The momentum factors can be fixed to be constant, rather than time-varying coefficients in existing analyses. Finally, the last-iterate convergence of the stochastic momentum methods is verified on the benchmark MNIST and CIFAR-10 datasets.
\end{abstract}

\begin{keyword}
Neural Networks; Last-iterate convergence; Stochastic momentum method; Heavy ball momentum; Nesterov accelerated gradient momentum; Non-convex optimization.
\end{keyword}

\end{frontmatter}


\section{Introduction.}

We consider non-convex  stochastic optimization problems of the form
\begin{equation}\label{eq:optfprobmcons}
\min_{x\in\mathbb{R}^d}f(x):=\mathbb{E}_{\xi}[\ell(x,\xi)],
\end{equation}
where $\ell$ is a smooth non-convex function and $\mathbb{E}_{\xi}[\cdot]$ denotes the statistical expectation with respect to the random variable $\xi$.
Optimization problems of this form arise naturally in machine learning where $x\in\mathbb{R}^d$ are the parameters of neural networks \cite{LeCun,Schmidhuber,Mandicbk}, $\ell$ represents a loss from individual training examples
or mini-batches and $f$ is the full training objective
function. One of the most popular algorithms for solving such optimization problem is stochastic gradient descent (SGD) \cite{Robbins,Luo22}. The
advantages of SGD for large scale stochastic optimization and the related issues of tradeoffs between computational
and statistical efficiency was highlighted in \cite{Bottou07}. Starting from $x_1\in\mathbb{R}^d$, SGD updates the parameters of the network model until convergence
\begin{equation}
x_{t+1}=x_{t}-\eta_tg_t,
\end{equation}
where $\eta_t>0$ is the stepsize and $g_t$ is a noisy gradient such that $\mathbb{E}_t[g_t]=\nabla f(x_t)$ is an unbiased estimator of the full gradient $\nabla f(x_t)$. Classical convergence analysis of the SGD has established that in order for $\lim_{t\to\infty}\|\nabla f(x_t)\|=0$, the stepsize evolution should satisfy
\begin{equation}\label{eq:etacondi}
\sum_{t=1}^\infty\eta_t=\infty,\quad\quad \sum_{t=1}^\infty\eta_t^2<\infty\;.
\end{equation}
However, the basic SGD (also called vanilla SGD) suffers from slow convergence, and to this end a variety of techniques have been introduced to improve convergence
speed, including adaptive stepsize methods \cite{Duchi,Tieleman, Xu21}, momentum  acceleration methods \cite{Nesterov,Polyak,GhadimiE} and adaptive gradient methods \cite{Kingma, Luo, Xu22}.  Among these algorithms, momentum methods are particularly desirable, since they merely require slightly more computation per
iteration. Heavy Ball (HB) \cite{Polyak,GhadimiE} and Nesterov
Accelerated Gradient (NAG) \cite{Nesterov} are two most popular momentum methods.
In non-convex stochastic optimization setting, existing convergence results of the momentum methods usually have the following form \cite{Ghadimi, FZou,YYan}
\begin{equation}
\lim_{T\to\infty}\min_{t\in[T]}\mathbb{E}\|\nabla f(x_t)\|^2=0,
\end{equation}
or
\begin{equation}
\lim_{T\to\infty}\mathbb{E}\|\nabla f(x_\tau)\|^2=0,
\end{equation}
where $x_\tau$ is an iterate randomly chosen from $\{x_1, x_2$, $\cdots,x_T\}$ under some probability distribution.
The most common type is the uniform distribution, where we have $\lim_{T\to\infty}\frac{1}{T}\sum_{t=1}^T\mathbb{E}\|\nabla f(x_t)\|^2=0$.
Note that these two convergence properties are weaker than the usual convergence, given by
\begin{equation}
\lim_{t\to\infty}\mathbb{E}\|\nabla f(x_t)\|^2=0,
\end{equation}
where the convergence of the last iterate of the momentum methods is referred
to as the \textit{last-iterate convergence}.

In this work, we revisit the momentum methods with the aim to answer two basic questions:
\begin{itemize}
\item[1)] Can the stochastic momentum methods archive the last-iterate convergence in the non-convex setting?
\item[2)] Can the momentum coefficient in the convergence analysis be fixed as a constant commonly used in practice?
\end{itemize}

Our analysis provides affirmative answers to both questions, with the main contributions of this work summarized as follows:
\begin{itemize}
\item[$\bullet$] Last-iterate convergence of the stochastic momentum methods is proven for the \textit{first} time in the non-convex setting, without bounded weight assumption. This theoretically supports the intuition of usually choosing the last iterate, rather than the minimum or random selection that relies on storage during the iterations \cite{FZou, YYan, ChenX, Zou}.
\item[$\bullet$] The convergence condition for the momentum coefficient is extended to be a fixed constant, rather than time-varying coefficients (e.g., $\sum_{t=1}^\infty\mu_t^2<\infty$ or even more complicated) with bounded weight assumption \cite{Wangj,Zhangnm}, which mirrors the actual implementations of the stochastic momentum method in deep learning libraries \cite{Stevens,Zaccone}.
\item[$\bullet$] Numerical experiments support the theoretical findings of the last-iterate convergence of the stochastic momentum methods,
and show that the stochastic momentum methods have good convergence performance on the benchmark datasets, while exhibiting good robustness for different interpolation factors.
\end{itemize}

The rest of this paper is organized as follows.
Section 2 introduces a unified view of the stochastic momentum methods.
The main convergence results with the rigorous proofs are presented in
Section 3. Experimental results are reported in Section 4. Finally, we conclude this work.\\

\noindent{\bf Notations.} We use $\mathbb{E}[\cdot]$ to denote the statistical expectation,
and $\mathbb{E}_t[\cdot]$ as the conditional expectation with respect to $g_{t}$ conditioned on the the previous $g_1,g_2,\cdots,g_{t-1}$,
$[T]$ denotes the set $\{1,2,\cdots,T\}$, while norm
$\|x\|$ is designated by $\|x\|_2$ if not otherwise specified.

\section{Stochastic unified momentum methods}
We next study the stochastic HB  and NAG using a unified formulation. The stochastic HB (SHB) is characterized by the iteration
\begin{equation}
{\rm SHB}: x_{t+1}=x_{t}-\eta_tg_{t}+\mu(x_{t}-x_{{t-1}}),
\end{equation}
with $x_0=x_1\in\mathbb{R}^d$, where $\mu$ is the momentum constant and $\eta_t$ is the step size.
By introducing $m_t=x_{t+1}-x_{t}$ with $m_0=0$, the above update becomes
\begin{equation}
{\rm SHB}:\quad
    \begin{cases}
      m_{t}=\mu m_{t-1}-\eta_tg_{t}\\
      x_{t+1}=x_{t}+ m_{t}
    \end{cases}.
\end{equation}
The update of stochastic NAG (SNAG) is given by
\begin{equation}
{\rm SNAG}:\quad
    \begin{cases}
      y_{t+1}=x_{t}-\eta_tg_{t}\\
      x_{t+1}=y_{t+1}+\mu (y_{t+1}-y_t)
    \end{cases},
\end{equation}
with $x_1=y_1\in\mathbb{R}^d$. By introducing  $m_t=y_{t+1}-y_{t}$ with $m_0=0$, the iterate of SNAG can be equivalently written as
\begin{equation}
{\rm SNAG}:\quad
    \begin{cases}
      m_{t}=\mu m_{t-1}-\eta_tg_{t}\\
      x_{t+1}=x_{t}-\eta_tg_{t}+\mu m_t
    \end{cases}.
\end{equation}
Then, SHB and SNAG can be rewriten in the following stochastic unified momentum (SUM) form
\begin{equation}\label{eq:mupdaterl}
{\rm SUM}:\quad
    \begin{cases}
      m_{t}=\mu m_{t-1}-\eta_tg_{t}\\
      x_{t+1}=x_{t}-\lambda\eta_t g_{t}+(1-\tilde{\lambda}) m_{t}
    \end{cases},
\end{equation}
where $\mu\in[0,1)$ is the momentum constant, $\lambda\geq0$ is a interpolation factor
and $\tilde{\lambda}:=(1-\mu)\lambda$. When $\lambda=0$, we have SHB; when $\lambda=1$, we have SNAG.
To ensure that $1-\tilde{\lambda}$ is non-negative, $\lambda$ should be selected from the interval $[0,1/(1-\mu)]$.

\begin{remark}
Zou et al. \cite{FZou} unified SHB and SNAG as a two-step iterative scheme, given by
\begin{equation}\label{eq:zoumupdaterl}
m_{t}=\mu m_{t-1}-\eta_tg_{t},\quad x_{t+1}=x_{t}+m_t+\lambda\mu(m_t-m_{t-1}),
\end{equation}
with $m_0=0$. Note that \eqref{eq:mupdaterl} and \eqref{eq:zoumupdaterl} are completely equivalent in a mathematical sense. However, there is a subtle difference in the actual implementation. The iteration in \eqref{eq:mupdaterl} only needs the knowledge of $m_t$ at time $t$, while the iteration in \eqref{eq:zoumupdaterl} requires both $m_{t-1}$ and $m_t$ at the same time. In addition, the convergence of the iteration in \eqref{eq:zoumupdaterl} is characterized by $$\lim_{T\to\infty}\frac{1}{T}\sum_{t=1}^T\left(\mathbb{E}\|\nabla f(x_t)\|^{4/3}\right)^{3/2}=0,$$ which is weaker than the last-iterate convergence $\lim_{t\to\infty}\mathbb{E}\|\nabla f(x_t)\|^2=0$.
\end{remark}

\begin{algorithm}
\caption{SUM with the last-iterate output}\label{alg:sum1}
{\bf Input:} Initial point $x_1\in \mathbb{R}^d$, step size $\{\eta_t\}_{t\geq1}$ and $\mu\in[0,1)$
\begin{algorithmic}[1]
\State Set $m_0=0$ and $\lambda\in[0,1/(1-\mu)]$
\For{$t=1,2,...,T$}
    \State Get an unbiased noisy gradient $g_t$
    \State $m_{t}=\mu m_{t-1}-\eta_tg_{t}$
    \State $x_{t+1}=x_{t}-\lambda\eta_t g_{t}+(1-\lambda+\lambda\mu) m_{t}$
\EndFor
\end{algorithmic}
{\bf Output:} The last iterate $x_T$.
\end{algorithm}

\begin{remark}
Yan et al. \cite{YYan} unified SHB and SNAG as a three-step iterative scheme as follows
\begin{equation}\label{eq:yanmupdaterl}
\begin{split}
&y_{t+1}=x_t-\eta_tg_t,\quad \tilde{y}_{t+1}=x_t-\lambda\eta_tg_t,\\
&x_{t+1}=y_{t+1}+\mu(\tilde{y}_{t+1}-\tilde{y}_{t}),
\end{split}
\end{equation}
with $\tilde{y}_1=x_1$. Note that \eqref{eq:mupdaterl} is slightly more economical than \eqref{eq:yanmupdaterl}, in terms of computation and storage.
In addition, the convergence form of the iteration in \eqref{eq:yanmupdaterl} is $\lim_{T\to\infty}\min_{t\in[T]}\mathbb{E}\|\nabla f(x_t)\|^2=0$, which is
 a simple corollary of last-iterate convergence $\lim_{t\to\infty}\mathbb{E}\|\nabla f(x_t)\|^2=0$.
\end{remark}

Next, we introduce three different forms of the SUM method in terms of their convergence behaviour. Algorithm 1 corresponds to last-iterate convergence, Algorithm 2 corresponds to random selection convergence \cite{FZou}, while Algorithm 3 corresponds to the minimum gradient convergence \cite{YYan}.

\begin{algorithm}
\caption{SUM with random output}
{\bf Input:} Initial point $x_1 \in \mathbb{R}^d$, step size $\{\eta_t\}_{t\geq1}$ and $\mu\in[0,1)$.
\begin{algorithmic}[1]
\State Set $\mathcal{S}=\emptyset$, $m_0=0$ and $\lambda\in[0,1/(1-\mu)]$
\For{$t=1,2,...,T$}
    \State $\mathcal{S}=\mathcal{S}\cup\{x_{t}\}$
    \State Get an unbiased noisy gradient $g_t$
    \State $m_{t}=\mu m_{t-1}-\eta_tg_{t}$
    \State $x_{t+1}=x_{t}+m_t+\lambda\mu (m_{t}-m_{t-1})$
\EndFor
\end{algorithmic}
{\bf Output:} Choose $x_\tau$ from the set $\mathcal{S}$ with some probability distribution.
\end{algorithm}

\begin{algorithm}[t]
\caption{SUM with minimum output}
{\bf Input:} Initial point $x_1 \in \mathbb{R}^d$, step size $\{\eta_t\}_{t\geq1}$ and $\mu\in[0,1)$.
\begin{algorithmic}[1]
\State Set $x_{\tau}=\tilde{y}_1=x_1$, $v=\infty$ and $\lambda\in[0,1/(1-\mu)]$
\For{$t=1,2,...,T$}
    \State Calculate the full gradient $\nabla f(x_{t})$
	\If{$\|\nabla f(x_{t})\|<v$}
        \State $x_{\tau} =x_{t}$, $v=\|\nabla f(x_{t})\|$
	\EndIf
    \State Get an unbiased noisy gradient $g_t$
    \State $y_{t+1}=x_t-\eta_tg_t$
    \State $\tilde{y}_{t+1}=x_t-\lambda\eta_t g_t$
    \State $x_{t+1}=y_{t+1}+\mu(\tilde{y}_{t+1}-\tilde{y}_{t})$
\EndFor
\end{algorithmic}
{\bf Output:}  The minimum iterate $x_{\tau}$ that satisfies $\tau=\arg\min_{t\in[T]} \|\nabla f(x_{t})\|$.
\end{algorithm}

\begin{remark}
It is worth mentioning that the three SUM methods are mathematically equivalent,
but they may be slightly different in the actual implementation.
The main difference lies in the output mode.
Algorithm 1 and Algorithm 2 are basically the same regarding computational efficiency, but all updated parameters of Algorithm 2 need to be stored in memory. In addition, it is difficult to obtain the optimal parameters through the random selection of parameters in the actual implementation. Algorithm 3 makes it easy to archive the optimal parameters, but the computational cost of the full gradient in each iteration is too high to be adopted in practice.  If the last-iterate convergence of Algorithm 1 can be guaranteed, then Algorithm 1 with the last iterate is undoubtedly the best tradeoff scheme, because it has the advantages in both computation and storage.
\end{remark}

\section{Main results}

\subsection{Technical lemmas}
We now provide some lemmas, which are pivotal in the proof of last-iterate convergence of the proposed SUM method.
\begin{lemma}\label{lem:abcabp}
Let $\{ a_n\}_{n\geq1}$, $\{ b_n\}_{n\geq1}$ and $\{\tilde{a}_n\}_{n\geq1}$ be three non-negative real sequences such that $\sum_{n=1}^\infty a_n=\infty$,
$\sum_{n=1}^\infty a_nb_n^p<\infty$, $\lim_{n\to\infty}{a_n}/{\tilde{a}_n}=1$, and $|b_{n+1}-b_n|\leq C \tilde{a}_nb_n^{p-\epsilon}$ for some positive constants $C$, $p$ and $\epsilon\in[0,p]$.  Then, we have
\begin{equation}
\lim_{n\to\infty} b_n=0\,.
\end{equation}
\begin{proof}
Since $\sum_{n=1}^\infty a_nb_n^p$ converges, we necessarily have $\lim \inf_{n\to\infty} b_n=0$. Otherwise, it would contradict the
assumption $\sum_{n=1}^\infty a_n$ diverges.

We now proceed to establish the proof by contradiction, and assume that $\lim \sup_{n\to\infty} b_n\geq \nu >0$.
Let $\{m_k\}_{k\geq1}$, $\{n_k\}_{k\geq1}$ be sequence of indexes such that
$$ m_k<n_k<m_{k+1}$$,
\begin{equation}\label{eq:3epsilon}
\frac{\nu}{2}<b_n,\quad \text{ for $m_k\leq n<n_k$}\,,
\end{equation}
\begin{equation}\label{eq:3epsilonleq}
b_n\leq\frac{\nu}{2},\quad \text{ for $n_k\leq n<m_{k+1}$}\,.
\end{equation}
Since $\sum_{n=1}^\infty a_nb_n^p$ converges and $\lim_{n\to\infty}{a_n}/{\tilde{a}_n}=1$, there exists sufficiently large $\tilde{k}$ such that
\begin{equation}\label{eq:sertail}
\sum_{j=m_{\tilde{k}}}^\infty \tilde{a}_jb_j^p<\frac{1}{2C}\left(\frac{\nu}{2}\right)^{\epsilon+1}\,.
\end{equation}
Then for all $k\geq \tilde{k}$ and all $n$ with $m_k\leq n\leq n_k-1$, we have
\begin{equation}\label{eq:bnkbneps3}
\begin{split}
|b_{n_k}-b_n|&\leq\sum_{j=n}^{n_k-1}|b_{j+1}-b_j|\leq C\sum_{j=n}^{n_k-1}\tilde{a}_jb_j^{p-\epsilon}\\
&\leq C\left(\frac{\nu}{2}\right)^{-\epsilon}\sum_{j=n}^{n_k-1}\tilde{a}_jb_j^p\\
&\leq C\left(\frac{\nu}{2}\right)^{-\epsilon}\frac{1}{2C}\left(\frac{\nu}{2}\right)^{\epsilon+1}=\frac{\nu}{4}\,,
\end{split}
\end{equation}
where the last two inequalities follow from \eqref{eq:3epsilon} and \eqref{eq:sertail}. Thus,
\begin{equation}
b_n\leq b_{n_k}+\frac{\nu}{4}\leq\frac{3\nu}{4},\quad  \forall k\geq \tilde{k}, m_k\leq n\leq n_k-1\,.
\end{equation}
Upon combining the previous inequality with \eqref{eq:3epsilonleq}, we have $b_n\leq 3\nu/4,\quad  \forall n\geq m_{\tilde{k}}$.
This contradicts $\lim \sup_{n\to\infty} b_n\geq \nu >0$. Therefore, $\lim_{n\to\infty} b_n=0$.
\end{proof}
\end{lemma}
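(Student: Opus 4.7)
The plan is to proceed by contradiction using an oscillation argument typical of Robbins--Siegmund-style convergence lemmas. As a warm-up, I would first note that the hypotheses $\sum_{n} a_n = \infty$ together with $\sum_n a_n b_n^p < \infty$ already force $\liminf_{n\to\infty} b_n = 0$: if $b_n$ stayed bounded below by some positive constant $\delta$ on a tail, then the tail sum would satisfy $\sum a_n b_n^p \geq \delta^p \sum a_n = \infty$, contradicting the assumption. So the only way the lemma can fail is if $\limsup_{n\to\infty} b_n \geq \nu$ for some $\nu > 0$, which is what I would assume toward a contradiction.

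Given such a $\nu$, the sequence $b_n$ must oscillate across the band $(\nu/2, \nu)$ infinitely often. I would therefore extract index sequences $\{m_k\}$ and $\{n_k\}$ with $m_k < n_k < m_{k+1}$ capturing the ``high'' stretches where $b_n > \nu/2$ on $[m_k, n_k)$ and the ``low'' stretches where $b_n \leq \nu/2$ on $[n_k, m_{k+1})$. The heart of the argument is to bound the total variation on a high stretch by telescoping with the step-size hypothesis:
\begin{equation*}
|b_{n_k} - b_n| \leq \sum_{j=n}^{n_k-1} |b_{j+1} - b_j| \leq C \sum_{j=n}^{n_k-1} \tilde{a}_j b_j^{p-\epsilon}.
\end{equation*}
The key trick is to rewrite $b_j^{p-\epsilon} = b_j^{-\epsilon} \cdot b_j^p$ and exploit the fact that $b_j > \nu/2$ on a high stretch, so $b_j^{-\epsilon} \leq (\nu/2)^{-\epsilon}$. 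This converts the variation bound into $C(\nu/2)^{-\epsilon} \sum_{j=n}^{n_k-1} \tilde{a}_j b_j^p$. Since $a_n/\tilde a_n \to 1$ transfers the summability of $a_j b_j^p$ to $\tilde a_j b_j^p$, the tail of $\sum_j \tilde a_j b_j^p$ can be made smaller than $\frac{1}{2C}(\nu/2)^{\epsilon+1}$ for all indices beyond some large $m_{\tilde k}$. The telescoping bound then collapses to $|b_{n_k} - b_n| \leq \nu/4$, which combined with $b_{n_k} \leq \nu/2$ yields $b_n \leq 3\nu/4$ throughout $[m_k, n_k)$ for every $k \geq \tilde k$; together with the low-stretch bound this forces $b_n \leq 3\nu/4$ for all $n \geq m_{\tilde k}$, contradicting $\limsup b_n \geq \nu$.

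The main technical obstacle I expect is handling the condition $a_n/\tilde a_n \to 1$ cleanly when transferring summability from $a_n b_n^p$ to $\tilde a_n b_n^p$, since the ratio only needs to be close to $1$ eventually rather than uniformly bounded; this forces the argument to absorb a multiplicative slack (say, a factor of $2$) into the choice of $\tilde k$. A secondary subtlety is the choice of thresholds: using the levels $\nu/2$ and $\nu$ is natural, but one must be careful that the oscillation captures use the strict inequality on high stretches (to lower bound $b_j$ in the $b_j^{-\epsilon}$ step) while using the weak inequality at $n_k$ (to deduce $b_{n_k} \leq \nu/2$), so that the telescoping estimate actually produces a definite gap rather than closing up at the endpoints.
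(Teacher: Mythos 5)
Your proposal is correct and follows essentially the same route as the paper's own proof: the same $\liminf = 0$ observation, the same contradiction setup with high/low index stretches, the same telescoping bound with the $b_j^{-\epsilon} \leq (\nu/2)^{-\epsilon}$ trick, and the same tail estimate $\frac{1}{2C}(\nu/2)^{\epsilon+1}$ leading to $b_n \leq 3\nu/4$ on a tail. The two subtleties you flag (transferring summability from $a_n b_n^p$ to $\tilde a_n b_n^p$ via the eventual ratio bound, and the strict-versus-weak inequalities at the stretch endpoints) are handled the same way, if anything slightly more explicitly than in the paper.
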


From Lemma \ref{lem:abcabp}, we obtain the following corollary.
\begin{corollary}\label{cor:abcabp}
Let $\{ a_n\}_{n\geq1}$, $\{ b_n\}_{n\geq1}$ and $\{\tilde{a}_n\}_{n\geq1}$ be three non-negative real sequences such that $\sum_{n=1}^\infty a_n=\infty$,
$\sum_{n=1}^\infty a_nb_n^2<\infty$, $\lim_{n\to\infty}{a_n}/{\tilde{a}_n}=1$, and $|b_{n+1}-b_n|\leq C \tilde{a}_n$ for a positive constant $C$.  Then, we have
\begin{equation}
\lim_{n\to\infty} b_n=0\,.
\end{equation}
\end{corollary}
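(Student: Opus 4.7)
The plan is immediate: Corollary \ref{cor:abcabp} is a direct specialization of Lemma \ref{lem:abcabp}, so I would simply check that its hypotheses are a particular instance of those of the lemma and invoke it.

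First, I would fix the parameter choice $p = 2$ and $\epsilon = 2$ in Lemma \ref{lem:abcabp}. With this choice, the exponent $p - \epsilon$ appearing in the increment bound $|b_{n+1} - b_n| \leq C \tilde{a}_n b_n^{p-\epsilon}$ becomes zero, so $b_n^{p-\epsilon} = 1$, and the bound collapses exactly to the hypothesis $|b_{n+1} - b_n| \leq C \tilde{a}_n$ of the corollary. Furthermore, $\epsilon = 2$ lies in the admissible interval $[0, p] = [0, 2]$, so the side condition on $\epsilon$ in the lemma is satisfied.

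Next, I would verify the remaining hypotheses transfer verbatim: the sequences $\{a_n\}$, $\{b_n\}$, $\{\tilde{a}_n\}$ are non-negative by assumption; $\sum_{n=1}^\infty a_n = \infty$ is assumed; $\lim_{n \to \infty} a_n/\tilde{a}_n = 1$ is assumed; and $\sum_{n=1}^\infty a_n b_n^p = \sum_{n=1}^\infty a_n b_n^2 < \infty$ is exactly the second summability hypothesis of the corollary. All hypotheses of Lemma \ref{lem:abcabp} are therefore met.

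Applying the lemma under this specialization yields $\lim_{n \to \infty} b_n = 0$, which is the desired conclusion. There is no substantive obstacle here, since the corollary is only a restatement of the lemma in the relevant regime; the reason to record it separately is that in the main convergence theorem for SUM, the natural increment bound on $\|\nabla f(x_t)\|$ will be linear in $\eta_t$ rather than exhibiting a power of the gradient norm, and this is precisely the $\epsilon = p$ edge case of the lemma.
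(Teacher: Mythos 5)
Your proposal is correct and matches the paper's intent exactly: the paper derives Corollary~\ref{cor:abcabp} directly from Lemma~\ref{lem:abcabp} (without writing out the specialization), and your choice $p=2$, $\epsilon=2$ is precisely the right instantiation, with $\epsilon=p$ admissible since the lemma allows $\epsilon\in[0,p]$. Your verification that the remaining hypotheses transfer verbatim, and your remark on why this edge case is the one needed for the gradient-norm sequence in Theorem~\ref{thm:invconv}, are both accurate.
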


The following lemma follows from Lemma 1 in \cite{Bertsekas}, and greatly simplifies the proof of the last-iterate convergence of the proposed SUM method.
\begin{lemma}[\cite{Bertsekas}]\label{lem:xyz}
Let $Y_t$, $W_t$, and $Z_t$ be three sequences such that $W_t$ is nonnegative for all $t$. Also, assume that
\begin{equation}
Y_{t+1}\leq Y_t-W_t+Z_t,\quad t=0, 1, \cdots,
\end{equation}
and that the series $\sum_{t=0}^TZ_t$ converges as $T\to\infty$. Then, either $Y_t\to-\infty$ or else $Y_t$ converges to a finite value and $\sum_{t=0}^\infty W_t<\infty$.
\end{lemma}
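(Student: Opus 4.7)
The plan is to transform the given inequality into a monotone one by shifting $Y_t$ by the tail sum of $Z_t$. Since $\sum_{t=0}^\infty Z_t$ converges, the Cauchy criterion guarantees that $S_t := \sum_{s=t}^\infty Z_s$ is well-defined for every $t$ and satisfies $S_t \to 0$ as $t \to \infty$. I would then introduce the auxiliary sequence $\tilde{Y}_t := Y_t + S_t$.

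Using $Z_t = S_t - S_{t+1}$ together with the hypothesis $Y_{t+1} \leq Y_t - W_t + Z_t$, a direct substitution gives
\[
\tilde{Y}_{t+1} \;=\; Y_{t+1} + S_{t+1} \;\leq\; Y_t - W_t + Z_t + S_{t+1} \;=\; \tilde{Y}_t - W_t.
\]
Since $W_t \geq 0$, this shows that $\tilde{Y}_t$ is non-increasing. A monotone real sequence has a limit in $\mathbb{R} \cup \{-\infty\}$; call it $L$. Because $S_t \to 0$, the original sequence $Y_t = \tilde{Y}_t - S_t$ converges to the same $L$, so either $Y_t \to -\infty$ or $Y_t$ converges to a finite value, which is the first half of the conclusion.

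For the summability claim, I would telescope the monotone inequality $W_t \leq \tilde{Y}_t - \tilde{Y}_{t+1}$ from $t=0$ to $T$, giving
\[
\sum_{t=0}^T W_t \;\leq\; \tilde{Y}_0 - \tilde{Y}_{T+1}.
\]
In the finite-limit case, $\tilde{Y}_{T+1} \to L \in \mathbb{R}$, so passing to the limit $T \to \infty$ yields $\sum_{t=0}^\infty W_t \leq \tilde{Y}_0 - L < \infty$, as required.

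The only non-mechanical step is identifying the right shift: the substitution $\tilde{Y}_t = Y_t + S_t$ absorbs the perturbation $Z_t$ cleanly and converts the inequality into a monotone one. Without this trick, one would have to control partial sums of $Z_t$ (which may have mixed signs) by a delicate $\limsup/\liminf$ argument; with it, the remainder of the proof reduces to routine properties of monotone sequences and telescoping.
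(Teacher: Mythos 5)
Your proof is correct. Note that the paper itself does not prove this lemma---it imports it verbatim as Lemma~1 of Bertsekas and Tsitsiklis \cite{Bertsekas}---so there is no in-paper argument to compare against. Your tail-shift construction $\tilde{Y}_t = Y_t + \sum_{s=t}^{\infty} Z_s$ is the standard way to reduce the perturbed descent inequality to a monotone one, and every step (well-definedness of the tails by the Cauchy criterion, the telescoping bound $\sum_{t=0}^{T} W_t \leq \tilde{Y}_0 - \tilde{Y}_{T+1}$, and the two-case conclusion) is sound; it serves as a valid self-contained proof of the cited result.
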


\subsection{Last-iterate convergence of the SUM methods}

The following lemma is used to determine the convergence of the series of parameter increments.
\begin{lemma}\label{lem:smbound}
Assume that $\mathbb{E}\| g_t \|^2\leq  G^2$. If the stepsize sequence satisfies \eqref{eq:etacondi}, then we have
\begin{equation}
\sum_{t=1}^\infty\mathbb{E}\|x_{t+1}-x_t\|^2<\infty,\quad
\sum_{t=1}^\infty \left(\sum_{k=1}^{t-1}\mu ^{t-k}\mathbb{E}\|m_{k}\|^2\right)<\infty\,.
\end{equation}
\end{lemma}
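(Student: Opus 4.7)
The plan is to first obtain a usable bound on $\mathbb{E}\|m_t\|^2$ by unrolling the momentum recursion. Starting from $m_0=0$ and iterating $m_t=\mu m_{t-1}-\eta_t g_t$ gives the closed form $m_t=-\sum_{k=1}^t\mu^{t-k}\eta_k g_k$. To separate the geometric weights from the gradient norms, I would apply Cauchy--Schwarz with weights $\mu^{t-k}$, writing $\mu^{t-k}\eta_k\|g_k\|=\sqrt{\mu^{t-k}}\cdot\sqrt{\mu^{t-k}}\,\eta_k\|g_k\|$, to obtain
\begin{equation*}
\|m_t\|^2 \le \Bigl(\sum_{k=1}^t\mu^{t-k}\Bigr)\Bigl(\sum_{k=1}^t\mu^{t-k}\eta_k^2\|g_k\|^2\Bigr)\le \frac{1}{1-\mu}\sum_{k=1}^t\mu^{t-k}\eta_k^2\|g_k\|^2,
\end{equation*}
since $\sum_{k=1}^t\mu^{t-k}\le 1/(1-\mu)$. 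Taking expectation and using $\mathbb{E}\|g_k\|^2\le G^2$ yields $\mathbb{E}\|m_t\|^2\le \frac{G^2}{1-\mu}\sum_{k=1}^t\mu^{t-k}\eta_k^2$.

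Next, summing this bound over $t$ and swapping the order of summation (all terms are non-negative, so Tonelli applies),
\begin{equation*}
\sum_{t=1}^\infty\mathbb{E}\|m_t\|^2 \le \frac{G^2}{1-\mu}\sum_{k=1}^\infty\eta_k^2\sum_{t=k}^\infty\mu^{t-k}=\frac{G^2}{(1-\mu)^2}\sum_{k=1}^\infty\eta_k^2,
\end{equation*}
which is finite by the assumption $\sum\eta_t^2<\infty$ in \eqref{eq:etacondi}.

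With $\sum_t\mathbb{E}\|m_t\|^2<\infty$ in hand, the first claim follows from the SUM update $x_{t+1}-x_t=-\lambda\eta_t g_t+(1-\tilde\lambda)m_t$ together with the elementary inequality $\|a+b\|^2\le 2\|a\|^2+2\|b\|^2$: taking expectation gives $\mathbb{E}\|x_{t+1}-x_t\|^2\le 2\lambda^2 G^2\eta_t^2+2(1-\tilde\lambda)^2\mathbb{E}\|m_t\|^2$, and both terms are summable. The second claim is handled by swapping the order of summation once more:
\begin{equation*}
\sum_{t=1}^\infty\sum_{k=1}^{t-1}\mu^{t-k}\mathbb{E}\|m_k\|^2=\sum_{k=1}^\infty\mathbb{E}\|m_k\|^2\sum_{t=k+1}^\infty\mu^{t-k}=\frac{\mu}{1-\mu}\sum_{k=1}^\infty\mathbb{E}\|m_k\|^2<\infty.
\end{equation*}

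The argument is essentially mechanical, so there is no serious obstacle; the only step requiring a moment of care is choosing the correct weighted Cauchy--Schwarz splitting so that the geometric factor decouples cleanly from $\eta_k^2\|g_k\|^2$, leaving a double sum whose order can be swapped to produce the telescoping geometric factor $1/(1-\mu)$.
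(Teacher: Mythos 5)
Your proposal is correct and follows essentially the same route as the paper: unroll the recursion to $m_t=-\sum_{k=1}^t\mu^{t-k}\eta_k g_k$, apply the weighted Cauchy--Schwarz (Jensen) splitting to get $\mathbb{E}\|m_t\|^2\le\frac{G^2}{1-\mu}\sum_{k=1}^t\mu^{t-k}\eta_k^2$, and then sum over $t$. The only cosmetic difference is that you justify summability of the double sum by an explicit Tonelli exchange, whereas the paper invokes the convergence of the Cauchy product of $\sum\eta_k^2$ and $\sum\mu^{t-1}$ --- these are the same computation.
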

\begin{proof}
From \eqref{eq:mupdaterl}, we have
\begin{equation}\label{eq:mtisumbag}
m_{t}=-\sum_{k=1}^t\mu ^{t-k}\eta_kg_{k}=-\sum_{k=0}^{t-1}\mu ^k\eta_{t-k}g_{t-k}\,,
\end{equation}
and
\begin{equation}\label{eq:mtisumbagst}
\begin{split}
\|x_{t+1}-x_t\|^2&\leq\|-\lambda\eta_t g_{t}+(1-\tilde{\lambda}) m_{t}\|^2\\
&\leq2\lambda^2\eta_t^2 \|g_{t}\|^2+2(1-\tilde{\lambda})^2\| m_{t}\|^2\,.
\end{split}
\end{equation}
By Jensen's inequality \cite{Linz}, we get
\begin{equation}
\begin{split}
\|m_{t}\|^2&\leq\left(\sum_{k=1}^t\mu ^{t-k}\eta_k\|g_{k}\|\right)^2\leq\left(\sum_{k=1}^t\mu ^{t-k}\right)\sum_{k=1}^t\mu ^{t-k}\eta_k^2\|g_{k}\|^2\,.
\end{split}
\end{equation}
Upon taking the total expectation, we have
\begin{equation}\label{eq:emt2g2}
\begin{split}
\mathbb{E}[\|m_{t}\|^2]&\leq\left(\sum_{k=1}^t\mu ^{t-k}\right)\sum_{k=1}^t\mu ^{t-k}\eta_k^2\mathbb{E}[\|g_{k}\|^2]\leq \frac{G^2}{(1-\mu )}\sum_{k=1}^t\mu ^{t-k}\eta_k^2\,,
\end{split}
\end{equation}
and
\begin{equation}
\begin{split}
\mathbb{E}[\|x_{t+1}-x_t\|^2]&\leq2\lambda^2\eta_t^2 \mathbb{E}[\|g_{t}\|^2]+2(1-\tilde{\lambda})^2\mathbb{E}[\| m_{t}\|^2]\\
&\leq2\lambda^2 G^2\eta_t^2 +2(1-\tilde{\lambda})^2\mathbb{E}[\| m_{t}\|^2]\,.
\end{split}
\end{equation}
From the expression for the stepsizes \eqref{eq:etacondi}, we arrive at
\begin{equation}
\sum_{t=1}^{\infty}\eta_t^2<\infty, \quad\quad \sum_{t=1}^{\infty}\mu ^{t-1}<\infty,\quad 0<\mu <1\,.
\end{equation}
Since that $\sum_{k=1}^{t}\eta_k^2\mu ^{t-k}$ is the Cauchy product of the previous two positive convergent series \cite{Rudin}, this yields
\begin{equation}\label{eq:sumsumetauk}
\sum_{t=1}^\infty \left(\sum_{k=1}^{t-1}\eta_k^2\mu ^{t-k}\right)<\infty\,.
\end{equation}
Upon combing \eqref{eq:emt2g2}-\eqref{eq:sumsumetauk}, we finally have
\begin{equation}
\sum_{t=1}^\infty\mathbb{E}[\|m_{t}\|^2]<\infty,\quad \sum_{t=1}^\infty\mathbb{E}[\|x_{t+1}-x_t\|^2]<\infty,
\end{equation}
and in a similar manner
\begin{equation}
\sum_{t=1}^\infty \left(\sum_{k=1}^{t-1}\mu ^{t-k}\mathbb{E}[\|m_{k}\|^2]\right)<\infty\,.
\end{equation}
This completes the proof.
\end{proof}

The following lemma is used to determine the consistency between the direction of momentum $m_t$ and the direction of the full negative gradient.
\begin{lemma}\label{lem:fxsinequ}
Assume that $\nabla f$ is $L$-Lipschitz continuous and $\mathbb{E}\| g_t \|^2\leq  G^2$. Then for $x_t$ generated by SUM (Algorithm \ref{alg:sum1}), we have the following bound
\begin{equation}
\begin{split}
\mathbb{E}\left[\nabla f(x_t)^Tm_{t}\right]&\leq -\sum_{k=1}^{t}\mu ^{t-k}\eta_k\mathbb{E}[\|\nabla f(x_k)\|^2]\\
&\quad +2L\sum_{k=1}^{t-1}\mu ^{t-k}\mathbb{E}\left\|m_{k}\right\|^2
+L\lambda^2G^2\sum_{k=1}^{t-1}\mu ^{t-k}\eta_k^2\,.
\end{split}
\end{equation}
\end{lemma}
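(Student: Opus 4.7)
The plan is to set up a one-step recursion in $A_t := \mathbb{E}[\nabla f(x_t)^T m_t]$ from the momentum update, handle the gradient time shift with $L$-smoothness, and then unroll.

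First, I would take the inner product of $m_t = \mu m_{t-1} - \eta_t g_t$ with $\nabla f(x_t)$ and apply $\mathbb{E}_t[g_t] = \nabla f(x_t)$ followed by total expectation to obtain
\[
A_t \;=\; \mu\,\mathbb{E}\bigl[\nabla f(x_t)^T m_{t-1}\bigr] \;-\; \eta_t\,\mathbb{E}\|\nabla f(x_t)\|^2.
\]
To reduce the first term to $A_{t-1}$, I would split
\[
\nabla f(x_t)^T m_{t-1} = \nabla f(x_{t-1})^T m_{t-1} + \bigl(\nabla f(x_t) - \nabla f(x_{t-1})\bigr)^T m_{t-1},
\]
control the cross term by $L\|x_t - x_{t-1}\|\cdot\|m_{t-1}\|$ via Cauchy--Schwarz and $L$-Lipschitz smoothness, and use the SUM update to bound $\|x_t - x_{t-1}\| \leq \lambda\eta_{t-1}\|g_{t-1}\| + (1-\tilde\lambda)\|m_{t-1}\|$.

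Applying Young's inequality $2ab \leq a^2 + b^2$ to the product $\lambda\eta_{t-1}\|g_{t-1}\|\cdot\|m_{t-1}\|$ yields $L\|x_t - x_{t-1}\|\cdot\|m_{t-1}\| \leq \tfrac{1}{2}L\lambda^2\eta_{t-1}^2\|g_{t-1}\|^2 + L\bigl(\tfrac{3}{2} - \tilde\lambda\bigr)\|m_{t-1}\|^2$. Because $\tilde\lambda = (1-\mu)\lambda \in [0,1]$ under the admissible range $\lambda \in [0,1/(1-\mu)]$, the $\|m_{t-1}\|^2$ coefficient is at most $2L$. Taking expectations and using $\mathbb{E}\|g_{t-1}\|^2 \leq G^2$ produces the one-step recursion
\[
A_t \;\leq\; \mu A_{t-1} - \eta_t\,\mathbb{E}\|\nabla f(x_t)\|^2 + \tfrac{1}{2}\mu L\lambda^2 G^2\,\eta_{t-1}^2 + 2\mu L\,\mathbb{E}\|m_{t-1}\|^2.
\]

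Finally, unrolling this recursion from $A_0 = 0$ (since $m_0 = 0$) produces three geometric-type sums in $\mu^{t-k}$; a reindex $k = j-1$ transforms them into $\sum_{k=1}^{t} \mu^{t-k}\eta_k\mathbb{E}\|\nabla f(x_k)\|^2$, $\sum_{k=1}^{t-1} \mu^{t-k}\eta_k^2$ and $\sum_{k=1}^{t-1} \mu^{t-k}\mathbb{E}\|m_k\|^2$, and the slack $\tfrac{1}{2} \leq 1$ absorbs into the stated coefficient $L\lambda^2 G^2$. The main obstacle, though mild, is calibrating the constants from Young's inequality so that the $\bigl(\tfrac{3}{2} - \tilde\lambda\bigr)$ factor on $\|m_{t-1}\|^2$ fits within the advertised $2L$; the remainder is algebra and index bookkeeping.
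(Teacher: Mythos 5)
Your proposal is correct and follows essentially the same route as the paper: a one-step recursion for $\mathbb{E}[\nabla f(x_t)^T m_t]$ via the unbiasedness of $g_t$, control of the shift $[\nabla f(x_t)-\nabla f(x_{t-1})]^T m_{t-1}$ through $L$-Lipschitzness, the SUM update bound on $\|x_t-x_{t-1}\|$, and Young's inequality, followed by unrolling. Your slightly different bookkeeping (bounding the norm before squaring, giving the factor $\tfrac{3}{2}-\tilde\lambda\leq 2$ and the slack $\tfrac{1}{2}$ on the $G^2$ term) yields a marginally tighter constant that still fits the stated bound.
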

\begin{proof}
From \eqref{eq:mupdaterl}, we have
\begin{equation}
\begin{split}
\nabla f(x_t)^Tm_{t}&=\mu \nabla f(x_t)^Tm_{t-1}-\eta_t\nabla f(x_t)^Tg_{t}\,.
\end{split}
\end{equation}
Upon taking the conditional expectation with respect to $g_{t}$ conditioned on the the previous $g_1,g_2,\ldots,g_{t-1}$ and using $\mathbb{E}_t[g_t]=\nabla f(x_t)$ we arrive at
\begin{equation}\label{eq:nabfsss}
\begin{split}
\mathbb{E}_t[\nabla f(x_t)^Tm_{t}]&=\mu \nabla f(x_t)^Tm_{t-1}-\eta_t\nabla f(x_t)^T\mathbb{E}_t[g_{t}]\\
&=\mu \nabla f(x_t)^Tm_{t-1}-\eta_t\|\nabla f(x_t)\|^2\,.
\end{split}
\end{equation}
Here, the first equality follows from the fact that once we know $g_1,g_2,\ldots,g_{t-1}$, the values of $x_{t}$ and $m_{t-1}$ are not any more random.
By \eqref{eq:mupdaterl}, we obtain
\begin{equation}
\begin{split}
\|x_{t+1}-x_t\|^2&\leq\|-\lambda \eta_t g_{t}+(1-\tilde{\lambda})m_{t}\|^2\\
&\leq2\lambda^2\eta_t^2 \|g_{t}\|^2+2(1-\tilde{\lambda})^2\| m_{t}\|^2\,.
\end{split}
\end{equation}
By the $L$-Lipschitz continuity of $\nabla f$ and Cauchy-Schwarz inequality \cite{Linz}, we have
\begin{equation}
\begin{split}
[\nabla &f(x_t)-\nabla f(x_{t-1})]^Tm_{t-1}\\
&\leq\|\nabla f(x_t)-\nabla f(x_{t-1})\|\|m_{t-1}\|\\
&\leq L\|x_{t}-x_{t-1}\|\|m_{t-1}\|\leq \frac{L}{2}\|x_{t}-x_{t-1}\|^2+\frac{L}{2}\|m_{t-1}\|^2\\
&\leq L\lambda^2\eta_{t-1}^2 \|g_{t-1}\|^2+L(1-\tilde{\lambda})^2\| m_{t-1}\|^2+\frac{L}{2}\|m_{t-1}\|^2\\
&\leq L\lambda^2\eta_{t-1}^2 \|g_{t-1}\|^2+2L\|m_{t-1}\|^2\,,
\end{split}
\end{equation}
where the last inequality follows from the fact that $\tilde{\lambda}\in[0,1]$. Next,
 using the previous inequality, the estimate \eqref{eq:nabfsss} becomes
\begin{equation}
\begin{split}
&\mathbb{E}_t[\nabla f(x_t)^Tm_{t}]=\mu \nabla f(x_t)^Tm_{t-1}-\eta_t\|\nabla f(x_t)\|^2\\
&\leq\mu \nabla f(x_{t-1})^Tm_{t-1}\!-\!\eta_t\|\nabla f(x_t)\|^2+\mu [\nabla f(x_t)-\nabla f(x_{t-1})]^Tm_{t-1}\\
&\leq\mu \nabla f(x_{t-1})^Tm_{t-1}\!-\!\eta_t\|\nabla f(x_t)\|^2 +\mu  L\lambda^2\eta_{t-1}^2 \|g_{t-1}\|^2+2\mu L\|m_{t-1}\|^2.
\end{split}
\end{equation}
Upon taking the total expectation, we arrive at
\begin{equation}
\begin{split}
\mathbb{E}[\nabla f(x_t)^Tm_{t}]&\leq\mu \mathbb{E}[\nabla f(x_{t-1})^Tm_{t-1}]-\eta_t\mathbb{E}[\|\nabla f(x_t)\|^2]\\
&\quad +\mu  L\eta_{t-1}^2 \mathbb{E}[\|g_{t-1}\|^2]+2\mu L\mathbb{E}[\|m_{t-1}\|^2]\\
&\leq\mu \mathbb{E}[\nabla f(x_{t-1})^Tm_{t-1}]-\eta_t\mathbb{E}[\|\nabla f(x_t)\|^2]\\
&\quad +\mu L\lambda^2 G^2\eta_{t-1}^2+2\mu L\mathbb{E}[\|m_{t-1}\|^2]\,.
\end{split}
\end{equation}
It is now straightforward to establish by induction that
\begin{equation}
\begin{split}
\mathbb{E}\left[\nabla f(x_t)^Tm_{t}\right]&\leq -\sum_{k=1}^{t}\mu ^{t-k}\eta_k\mathbb{E}[\|\nabla f(x_k)\|^2]\\
&\quad +2L\sum_{k=1}^{t-1}\mu ^{t-k}\mathbb{E}\left\|m_{k}\right\|^2
+L\lambda^2G^2\sum_{k=1}^{t-1}\mu ^{t-k}\eta_k^2\,.
\end{split}
\end{equation}
This completes the proof.
\end{proof}

Based on the above lemmas, we obtain the main convergence theorem of the SUM method in this paper.
\begin{theorem}[Last-iterate Convergence]\label{thm:invconv}
Let $x_t$ be the sequence obtained from Algorithm 1 with a stepsize sequence satisfying \eqref{eq:etacondi}.
Assume that $f$ is lower-bounded by $f^*$, $\nabla f$ is $L$-Lipschitz continuous, $\mathbb{E}\| g_t \|^2\leq  G^2$ and $\lim_{t\to\infty}\eta_{t-1}/\eta_t=1$. Then, we have
\begin{equation}
\lim_{t\to\infty}\mathbb{E}[\|\nabla f(x_t)\|]=0,\quad \lim_{t\to\infty}\mathbb{E}[{f(x_t)}]=F^*\,,
\end{equation}
where $F^*$ is a finite constant.
\end{theorem}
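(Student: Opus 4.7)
My plan is to combine the standard $L$-smoothness descent inequality with the momentum cross-term bound of Lemma~\ref{lem:fxsinequ}, apply the Bertsekas-type Lemma~\ref{lem:xyz} to extract both convergence of $\mathbb{E}[f(x_t)]$ and $\sum_t\eta_t\mathbb{E}\|\nabla f(x_t)\|^{2}<\infty$, and then promote this summability to the last-iterate statement through Corollary~\ref{cor:abcabp}. The first step is to start from $f(x_{t+1})\le f(x_t)+\nabla f(x_t)^{T}(x_{t+1}-x_t)+\tfrac{L}{2}\|x_{t+1}-x_t\|^{2}$, substitute the SUM update $x_{t+1}-x_t=-\lambda\eta_tg_t+(1-\tilde{\lambda})m_t$ from~\eqref{eq:mupdaterl}, take conditional expectation so that $\nabla f(x_t)^{T}g_t$ becomes $\|\nabla f(x_t)\|^{2}$, and then total expectation, yielding
\begin{equation*}
\mathbb{E}[f(x_{t+1})]\le\mathbb{E}[f(x_t)]-\lambda\eta_t\mathbb{E}\|\nabla f(x_t)\|^{2}+(1-\tilde{\lambda})\mathbb{E}[\nabla f(x_t)^{T}m_t]+\tfrac{L}{2}\mathbb{E}\|x_{t+1}-x_t\|^{2}.
\end{equation*}

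Next I would substitute Lemma~\ref{lem:fxsinequ} into this recursion to separate one structured negative term from two nonnegative tails, collecting them as
\begin{equation*}
W_t:=\lambda\eta_t\mathbb{E}\|\nabla f(x_t)\|^{2}+(1-\tilde{\lambda})\sum_{k=1}^{t}\mu^{t-k}\eta_k\mathbb{E}\|\nabla f(x_k)\|^{2}\ge 0
\end{equation*}
and a remainder $Z_t\ge 0$ containing the two tails of Lemma~\ref{lem:fxsinequ} (scaled by $(1-\tilde{\lambda})$) plus $\tfrac{L}{2}\mathbb{E}\|x_{t+1}-x_t\|^{2}$. Lemma~\ref{lem:smbound}, combined with the Cauchy-product bound~\eqref{eq:sumsumetauk} used in its proof, immediately yields $\sum_tZ_t<\infty$. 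Applying Lemma~\ref{lem:xyz} with $Y_t=\mathbb{E}[f(x_t)]$ and ruling out the $-\infty$ alternative via $f\ge f^{*}$ then gives $\mathbb{E}[f(x_t)]\to F^{*}$ for some finite $F^{*}$ together with $\sum_tW_t<\infty$. Since the diagonal ($k=t$) term inside $W_t$ contributes the coefficient $\lambda+(1-\tilde{\lambda})=1+\mu\lambda\ge 1$ in front of $\eta_t\mathbb{E}\|\nabla f(x_t)\|^{2}$ and all off-diagonal terms are nonnegative, I read off $\sum_t\eta_t\mathbb{E}\|\nabla f(x_t)\|^{2}\le\sum_tW_t<\infty$.

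For the last-iterate claim I would apply Corollary~\ref{cor:abcabp} with $b_t:=\mathbb{E}\|\nabla f(x_t)\|$ and $a_t=\tilde{a}_t:=\eta_t$. Jensen gives $\sum_ta_tb_t^{2}\le\sum_t\eta_t\mathbb{E}\|\nabla f(x_t)\|^{2}<\infty$, while the $L$-Lipschitz property of $\nabla f$ combined with Jensen yields $|b_{t+1}-b_t|\le L\mathbb{E}\|x_{t+1}-x_t\|\le L\sqrt{\mathbb{E}\|x_{t+1}-x_t\|^{2}}$. Using~\eqref{eq:mtisumbagst}--\eqref{eq:emt2g2} together with the hypothesis $\eta_{t-1}/\eta_t\to 1$---pick $\varepsilon>0$ so small that $\mu(1+\varepsilon)^{2}<1$, so that eventually $\eta_{t-j}\le(1+\varepsilon)^{j}\eta_t$ and hence $\sum_k\mu^{t-k}\eta_k^{2}\le C\eta_t^{2}$---I obtain $\mathbb{E}\|x_{t+1}-x_t\|\le C'\eta_t$, that is $|b_{t+1}-b_t|\le C''\tilde{a}_t$. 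Corollary~\ref{cor:abcabp} then forces $b_t\to 0$, completing the proof.

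I expect the main technical obstacle to be this last estimate: converting the convolution bound on $\mathbb{E}\|m_t\|^{2}$ from Lemma~\ref{lem:smbound} into a genuine $O(\eta_t^{2})$ estimate is not automatic, and it is here that the hypothesis $\lim_{t\to\infty}\eta_{t-1}/\eta_t=1$ is used essentially, through a geometric-series comparison against $\mu$. A secondary delicacy will be the bookkeeping in the main recursion: verifying that each nonnegative remainder in $Z_t$ is summable under~\eqref{eq:etacondi} and that the combined negative diagonal coefficient dominates $\eta_t$ uniformly across the whole range $\lambda\in[0,1/(1-\mu)]$, including the degenerate endpoint $\tilde{\lambda}=1$ in which the momentum step vanishes and the scheme collapses to rescaled SGD.
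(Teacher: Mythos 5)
Your proposal reproduces the paper's argument essentially step for step: the same descent-lemma recursion, the same choice of $Y_t$, $W_t$, $Z_t$ in Lemma~\ref{lem:xyz}, the same extraction of $\sum_t\eta_t\mathbb{E}\|\nabla f(x_t)\|^2<\infty$ via the coefficient $1+\mu\lambda\geq1$, and the same final appeal to Corollary~\ref{cor:abcabp} with $b_t=\mathbb{E}\|\nabla f(x_t)\|$. The only divergence is cosmetic: where the paper keeps $\tilde{a}_t=(\eta_t+\tilde{\eta}_t)/2$ and uses the Stolz theorem to verify $\lim_t a_t/\tilde{a}_t=1$, you instead bound $\mathbb{E}\|x_{t+1}-x_t\|$ directly by $C'\eta_t$ through a geometric-series comparison (which also needs a one-line check that the finitely many early terms contribute only $O(\mu^t)=o(\eta_t^2)$); both are legitimate uses of the hypothesis $\eta_{t-1}/\eta_t\to1$ and lead to the same conclusion.
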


\begin{proof}
By the $L$-Lipschitz continuity of $\nabla f$ and the
descent lemma in \cite{Nesterovbk}, we have
\begin{equation}
\begin{split}
f(x_{t+1})\leq f(x_t)+\nabla f(x_t)^T (x_{t+1}-x_t)+\frac{L}{2}\|x_{t+1}-x_t\|^2\,.
\end{split}
\end{equation}
Recall \eqref{eq:mupdaterl}, then the previous inequality becomes
\begin{equation}
\begin{split}
&f(x_{t+1})\leq f(x_t)+\nabla f(x_t)^T (x_{t+1}-x_t)+\frac{L}{2}\|x_{t+1}-x_t\|^2\\
&= f(x_t)-\lambda \eta_t\nabla f(x_t)^Tg_t+(1-\tilde{\lambda})\nabla f(x_t)^Tm_t+\frac{L}{2}\|x_{t+1}-x_t\|^2\,.
\end{split}
\end{equation}
Upon taking the conditional expectation with respect to $g_{t}$ conditioned on the the previous $g_1,g_2,\ldots,g_{t-1}$ and using $\mathbb{E}_t[g_t]=\nabla f(x_t)$, we arrive at
\begin{equation}\label{eq:econcfffg}
\begin{split}
\mathbb{E}_t[f(x_{t+1})]&\leq f(x_t)-\lambda\eta_t  f(x_t)^T\mathbb{E}_t[g_t]\\
&\quad+ (1-\tilde{\lambda})\mathbb{E}_t[\nabla f(x_t)^Tm_t]+\frac{L}{2}\mathbb{E}_t[\|x_{t+1}-x_t\|^2]\\
&= f(x_t)-\lambda\eta_t\|\nabla f(x_t)\|^2\\
&\quad+ (1-\tilde{\lambda})\mathbb{E}_t[\nabla f(x_t)^Tm_t]+\frac{L}{2}\mathbb{E}_t[\|x_{t+1}-x_t\|^2]\,,
\end{split}
\end{equation}
where the first equality follows from the fact that once we know $g_1,g_2,\ldots,g_{t-1}$, the value of $x_{t}$ is not any more random.
Applying the law of total expectation \cite{Linz}, we take the total expectation on both sides of \eqref{eq:econcfffg}, and arrive at
\begin{equation}\label{eq:efeftt1}
\begin{split}
&\mathbb{E}[f(x_{t+1})]\leq \mathbb{E}[f(x_t)]-\lambda\eta_t\mathbb{E}[\|\nabla f(x_t)\|^2]\\
&\quad+ (1-\tilde{\lambda})\mathbb{E}[\nabla f(x_t)^Tm_t]+\frac{L}{2}\mathbb{E}[\|x_{t+1}-x_t\|^2]\,.
\end{split}
\end{equation}
Upon inserting the result from Lemma \ref{lem:fxsinequ} into \eqref{eq:efeftt1}, we have
\begin{equation}
\begin{split}
\mathbb{E}[f(x_{t+1})]&\leq\mathbb{E}[f(x_t)]-\lambda\eta_t\mathbb{E}[\|\nabla f(x_t)\|^2]\\
&-(1-\tilde{\lambda})\sum_{k=1}^{t}\mu ^{t-k}\eta_k\mathbb{E}[\|\nabla f(x_k)\|^2]\\
&+2(1-\tilde{\lambda}) L\sum_{k=1}^{t-1}\mu ^{t-k}\mathbb{E}\left\|m_{k}\right\|^2\\
&+(1-\tilde{\lambda}) L\lambda^2G^2\sum_{k=1}^{t-1}\mu ^{t-k}\eta_k^2+\frac{L}{2}\mathbb{E}[\|x_{t+1}-x_t\|^2]\,.
\end{split}
\end{equation}
From Lemma \ref{lem:smbound},  we know that
\begin{equation}
\sum_{t=1}^\infty\mathbb{E}\|x_{t+1}-x_t\|^2<\infty
,\quad
\sum_{t=1}^\infty\left(\sum_{k=1}^{t-1}\mu ^{t-k}\eta_k^2\right)<\infty\,,
\end{equation}
and
\begin{equation}
\sum_{t=1}^\infty \left(\sum_{k=1}^{t-1}\mu ^{t-k}\mathbb{E}\|m_{k}\|^2\right)<\infty\,.
\end{equation}
Thus, the conditions of Lemma \ref{lem:xyz} are satisfied for $Y_t=\mathbb{E}[f(x_t)]$, $W_t=\lambda\eta_t\mathbb{E}[\|\nabla f(x_t)\|^2]{+}(1-\tilde{\lambda})\sum_{k=1}^{t}\mu ^{t-k}\eta_k\mathbb{E}[\|\nabla f(x_k)\|^2]$  and $Z_t=$ $\frac{L}{2}\mathbb{E}[\|x_{t+1}-x_t\|^2]$+$2(1-\tilde{\lambda}) L\sum_{k=1}^{t-1}\mu ^{t-k}\mathbb{E}\left\|m_{k}\right\|^2$+$(1-\tilde{\lambda}) L\lambda^2G^2\sum_{k=1}^{t-1}\mu ^{t-k}\eta_k^2$. By Lemma \ref{lem:xyz}, we have
\begin{equation}
\lim_{t\to\infty}Y_t=\lim_{t\to\infty}\mathbb{E}[{f(x_t)}]=F^*\,,
\end{equation}
where $F^*$ is a finite constant and
\begin{equation}\label{eq:etakektozero}
\begin{split}
\sum_{t=1}^\infty\Bigl(\lambda\eta_t\mathbb{E}[\|\nabla f(x_t)\|^2]+(1-\tilde{\lambda})\sum_{k=1}^{t}\mu ^{t-k}\eta_k\mathbb{E}[\|\nabla f(x_k)\|^2]\Bigr)<\infty\,.
\end{split}
\end{equation}
Since
\begin{equation}\label{eq:beltaphefx}
\begin{split}
\sum_{t=1}^T\sum_{k=1}^{t}\mu ^{t-k}\eta_k\mathbb{E}[\|\nabla f(x_k)\|^2]&=\sum_{k=1}^T\eta_k\mathbb{E}[\|\nabla f(x_k)\|^2]\sum_{t=k}^{T}\mu ^{t-k}\\
&\geq \sum_{k=1}^T\eta_k\mathbb{E}[\|\nabla f(x_k)\|^2]\,.
\end{split}
\end{equation}
where the last inequality follows from the fact that $\sum_{t=k}^{T}\mu ^{t-k}\geq 1$. We can now combine \eqref{eq:etakektozero} and \eqref{eq:beltaphefx}, to yield
\begin{equation}
\sum_{t=1}^\infty\left((1-\tilde{\lambda}+\lambda)\eta_t\mathbb{E}[\|\nabla f(x_t)\|^2]\right)<\infty\,.
\end{equation}
Note that $\tilde{\lambda}=(1-\mu)\lambda$, then $1-\tilde{\lambda}+\lambda=1+\mu\lambda\geq1$ is a non-zero constant. From Jensen's inequality \cite{Linz}, it follows that
\begin{equation}\label{eq:alphnabfxk}
\sum_{t=1}^\infty\eta_t\big(\mathbb{E}[\|\nabla f(x_t)\|]\big)^2\leq\sum_{t=1}^\infty\eta_t\mathbb{E}[\|\nabla f(x_t)\|^2]<\infty\,,
\end{equation}
while by involving \eqref{eq:mupdaterl} and \eqref{eq:mtisumbag}, we have
\begin{equation}
\begin{split}
\|x_{t+1}-x_t\|&=\|-\lambda\eta_t g_{t}+(1-\tilde{\lambda}) m_{t}\|\leq\lambda\eta_t\|g_{t}\|+(1-\tilde{\lambda})\| m_{t}\|\\
&\leq\lambda\eta_t\|g_{t}\|+(1-\tilde{\lambda})\sum_{k=1}^t\mu ^{t-k}\eta_k\|g_{k}\|\,.
\end{split}
\end{equation}
Upon taking the total expectation, noting that $\lambda\in[0,1/(1-\mu)]$, we finally obtain
\begin{equation}\label{eq:mtbtkakgk}
\begin{split}
&\mathbb{E}[\|x_{t+1}-x_t\|]\leq\lambda\eta_t\mathbb{E}[\|g_{t}\|]+(1-\tilde{\lambda})\sum_{k=1}^t\mu ^{t-k}\eta_k\mathbb{E}[\|g_{k}\|]\\
&\leq \frac{2G}{1-\mu}\left(\frac{\eta_t}{2}+\frac{(1-\mu)\sum_{k=1}^t\mu ^{t-k}\eta_k}{2}\right)=\frac{2G}{(1-\mu)}\frac{\eta_t+\tilde{\eta}_t}{2}\,,
\end{split}
\end{equation}
where $\tilde{\eta}_t=(1-\mu)\sum_{k=1}^t\mu ^{t-k}\eta_k$. By the Stolz theorem of real sequence \cite{Rudin}, we next have
\begin{equation}
\begin{split}
\lim_{t\to\infty}\frac{\eta_t}{\tilde{\eta}_t}&=\frac{1}{1-\mu}
\lim_{t\to\infty}\frac{\eta_t/\mu^t}{\eta_t/\mu^t+\eta_{t-1}/\mu^{t-1}+\cdots+\eta_1/\mu}\\
&=\frac{1}{1-\mu}\lim_{t\to\infty}\frac{\eta_t/\mu^t-\eta_{t-1}/\mu^{t-1}}{\eta_t/\mu^t}\\
&=\frac{1}{1-\mu}\lim_{t\to\infty}\left(1-\mu\frac{\eta_{t-1}}{\eta_{t}}\right)=1\,,
\end{split}
\end{equation}
where the last equality follows from the condition $\lim_{t\to\infty}\eta_{t-1}/\eta_t=1$. By the $L$-Lipschitz continuity of $\nabla f$, we have
\begin{equation}
\begin{split}
\big|\|\nabla f(x_{t+1})\|-\|\nabla f(x_t)\|\big|&\leq\|\nabla f(x_{t+1})-\nabla f(x_t)\|\\
&\leq L\| x_{t+1}-x_t\|\leq L\|x_{t+1}-x_t\|\,.
\end{split}
\end{equation}
Upon taking the total expectation and using the Jensen's inequality \cite{Linz}, we arrive at
\begin{equation}\label{eq:ffalpsamm}
\begin{split}
\Big|\mathbb{E}[\|\nabla f(x_{t+1})\|]-\mathbb{E}[\|\nabla f(x_t)\|]\Big|&\leq
\mathbb{E}\left[\Bigl|\|\nabla f(x_{t+1})\|-\|\nabla f(x_t)\|\Bigr|\right]\\
&\leq L\mathbb{E}[\| x_{t+1}-x_t\|]\\
&\leq\frac{2LG}{(1-\mu)}\frac{\eta_t+\tilde{\eta}_t}{2}\,.
\end{split}
\end{equation}
It follows from \eqref{eq:alphnabfxk} and \eqref{eq:ffalpsamm} that the conditions of Corollary \ref{cor:abcabp} are satisfied for $a_t=\eta_t$,
$\tilde{a}_t=(\eta_t+\tilde{\eta}_t)/{2}$ and $b_t=\mathbb{E}[\|\nabla f(x_t)\|]$. Therefore,
\begin{equation}
\lim_{t\to\infty}\mathbb{E}[\|\nabla f(x_t)\|]=0\,.
\end{equation}
This completes the proof.
\end{proof}

\begin{remark}
It should be noted that, including but not
limited to, there is a large class of non-increasing sequences
that satisfy the step size conditions of Theorem \ref{thm:invconv}, i.e.,
\begin{equation}\label{eq:noncrsstepsize}
\eta_t=
    \begin{cases}
      \alpha,\quad &t\leq K \\
      \alpha/(t-K)^p,\quad &t> K
    \end{cases},\quad  p\in(1/2,1]\,.
\end{equation}
where $\alpha$ and $K$ are two positive constants.
\end{remark}

The following result falls as an immediate corollary of Theorem \ref{thm:invconv}.
\begin{corollary}
Suppose that the conditions in Theorem \ref{thm:invconv} hold. Then,
\begin{equation}
\lim_{T\to\infty}\min_{t\in[T]}\mathbb{E}\|\nabla f(x_t)\|^2=0,\quad \lim_{T\to\infty}\frac{1}{T}\sum_{t=1}^T\mathbb{E}\|\nabla f(x_t)\|^2=0\,.
\end{equation}
\end{corollary}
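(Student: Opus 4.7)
My plan is to obtain both conclusions from the intermediate estimate
\begin{equation*}
\sum_{t=1}^\infty \eta_t\,\mathbb{E}\|\nabla f(x_t)\|^2 < \infty,
\end{equation*}
which is established within the proof of Theorem~\ref{thm:invconv} by combining \eqref{eq:etakektozero} with the algebraic identity $1-\tilde\lambda+\lambda = 1+\mu\lambda\geq 1$. The min statement is then a one-line weighted-average inequality: bounding the minimum by the weighted mean gives
\begin{equation*}
\min_{t\in[T]}\mathbb{E}\|\nabla f(x_t)\|^2 \leq \frac{\sum_{t=1}^T\eta_t\,\mathbb{E}\|\nabla f(x_t)\|^2}{\sum_{t=1}^T\eta_t},
\end{equation*}
and the right-hand side vanishes as $T\to\infty$ because the numerator stays bounded while $\sum\eta_t$ diverges by \eqref{eq:etacondi}.

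For the time-average statement, I would first upgrade the $L^1$-convergence $\mathbb{E}\|\nabla f(x_t)\|\to 0$ of Theorem~\ref{thm:invconv} to the $L^2$-convergence $\mathbb{E}\|\nabla f(x_t)\|^2\to 0$; once that is in place, the classical Cesaro-mean theorem yields $\tfrac{1}{T}\sum_{t=1}^T\mathbb{E}\|\nabla f(x_t)\|^2\to 0$ at once. To effect the upgrade, I would re-apply Lemma~\ref{lem:abcabp} with $b_t = \mathbb{E}\|\nabla f(x_t)\|^2$, $a_t = \eta_t$, and exponents $p=\epsilon=1$: the summability hypothesis $\sum a_tb_t^p<\infty$ is precisely the display above. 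For the variation hypothesis, I would factor $\|\nabla f(x_{t+1})\|^2-\|\nabla f(x_t)\|^2=(\|\nabla f(x_{t+1})\|-\|\nabla f(x_t)\|)(\|\nabla f(x_{t+1})\|+\|\nabla f(x_t)\|)$, apply the $L$-Lipschitz continuity of $\nabla f$ together with Cauchy-Schwarz, and use $\mathbb{E}\|\nabla f(x_t)\|^2\leq\mathbb{E}\|g_t\|^2\leq G^2$ (via Jensen applied to the conditional expectation $\nabla f(x_t)=\mathbb{E}_t[g_t]$) to arrive at
\begin{equation*}
|b_{t+1}-b_t| \leq 2LG\sqrt{\mathbb{E}\|x_{t+1}-x_t\|^2}.
\end{equation*}

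The main obstacle will be producing a sequence $\tilde a_t$ of order $\eta_t$ that dominates this right-hand side. The inequality $\mathbb{E}\|x_{t+1}-x_t\|^2\leq 2\lambda^2G^2\eta_t^2+\frac{2(1-\tilde\lambda)^2G^2}{1-\mu}\sum_{k=1}^t\mu^{t-k}\eta_k^2$, already available from the proof of Lemma~\ref{lem:smbound}, points to the choice $\tilde a_t := \tfrac{1}{2}\bigl(\eta_t+\sqrt{(1-\mu)\sum_{k=1}^t\mu^{t-k}\eta_k^2}\bigr)$. The required asymptotic $\eta_t/\tilde a_t\to 1$ then reduces to showing $\sqrt{(1-\mu)\sum_{k=1}^t\mu^{t-k}\eta_k^2}/\eta_t\to 1$, which is the exact analogue for $\eta_t^2$ of the Stolz-Cesaro argument carried out at the end of Theorem~\ref{thm:invconv}'s proof; it transfers verbatim using $\lim_{t\to\infty}\eta_{t-1}^2/\eta_t^2=\bigl(\lim_{t\to\infty}\eta_{t-1}/\eta_t\bigr)^2=1$. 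With this in hand, Lemma~\ref{lem:abcabp} supplies $\mathbb{E}\|\nabla f(x_t)\|^2\to 0$, and Cesaro delivers the time-average conclusion.
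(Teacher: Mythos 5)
Your proof is correct, but it is doing genuinely more work than the paper, which offers no argument at all and simply declares the result an ``immediate corollary'' of Theorem \ref{thm:invconv}. The implied one-line derivation --- last-iterate convergence implies min- and Ces\`aro-convergence --- has a real gap that you correctly identified: Theorem \ref{thm:invconv} concludes $\mathbb{E}\|\nabla f(x_t)\|\to 0$ (a first moment), whereas the corollary is stated for $\mathbb{E}\|\nabla f(x_t)\|^2$ (a second moment), and under the paper's total-expectation reading of $\mathbb{E}\|g_t\|^2\le G^2$ the second-moment bound $\mathbb{E}\|\nabla f(x_t)\|^2\le G^2$ alone does not let one pass from $L^1$- to $L^2$-convergence. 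Your two fixes are both sound: for the min statement you bypass the theorem's conclusion entirely and argue directly from the summability $\sum_t\eta_t\mathbb{E}\|\nabla f(x_t)\|^2<\infty$ (already extracted in the proof of the theorem from \eqref{eq:etakektozero}) together with $\sum_t\eta_t=\infty$; for the Ces\`aro statement you rerun Lemma \ref{lem:abcabp} with $b_t=\mathbb{E}\|\nabla f(x_t)\|^2$, $p=\epsilon=1$, where the variation bound $|b_{t+1}-b_t|\le 2LG\bigl(\mathbb{E}\|x_{t+1}-x_t\|^2\bigr)^{1/2}$ follows from the factorization of the difference of squares, Lipschitzness, Cauchy--Schwarz and the uniform second-moment bound, and the required asymptotic $\eta_t/\tilde a_t\to 1$ for $\tilde a_t=\tfrac12\bigl(\eta_t+\bigl((1-\mu)\sum_{k\le t}\mu^{t-k}\eta_k^2\bigr)^{1/2}\bigr)$ is the same Stolz computation the paper performs for $\tilde\eta_t$, applied to $\eta_t^2$. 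The payoff of your route is that it actually proves the stronger intermediate fact $\lim_{t\to\infty}\mathbb{E}\|\nabla f(x_t)\|^2=0$, i.e.\ the last-iterate convergence in the form the paper's introduction and remarks repeatedly advertise but Theorem \ref{thm:invconv} does not literally state; the only cosmetic simplification available is that once this is established, the min statement also follows trivially from $\min_{t\in[T]}\mathbb{E}\|\nabla f(x_t)\|^2\le\mathbb{E}\|\nabla f(x_T)\|^2$, so your separate weighted-average argument, while perfectly valid, is not strictly needed.
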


\section{Experiments}
This section validates the convergence theory of the proposed SUM method for training
neural networks, and for all the tested algorithms,
we take the sequence of step sizes according to \eqref{eq:noncrsstepsize}.
By convention, the last-iterate convergences of the SUM method are evaluated via the training loss and test accuracy versus the number of epochs, respectively. For a fair comparison, every algorithm used the same amount of training data, the same initial step size $\alpha$, and the same initial weights.
We run all the experiments on a server with AMD Ryzen TR 2990WX CPU, 64GB RAM, and one NVIDIA GTX-2080TI GPU using a public accessible code\footnote{https://github.com/kuangliu/pytorch-cifar/}
with Python 3.7.9 and Pytorch 1.8.1.

\subsection{Dataset}

\noindent {\bf MNIST Dataset:} The MNIST dataset \cite{Lenet}
contains 60,000 training samples and 10,000 test samples of the handwritten digits from 0 to 9.
The images are grayscale, $28\times28$ pixels, and centered to reduce preprocessing.

\noindent {\bf CIFAR-10 Dataset:} The CIFAR-10 dataset \cite{CIFAR} consists of 60,000 $32\times32$ color images drawn
from 10 classes, which is divided into a traing set with 50,000 images and a test set with 10,000 images.

\subsection{Results of LeNet on MNIST dataset}

\begin{figure}[htbp]
  \centering
  \subfigure[Training loss]{\includegraphics[width=2.6in]{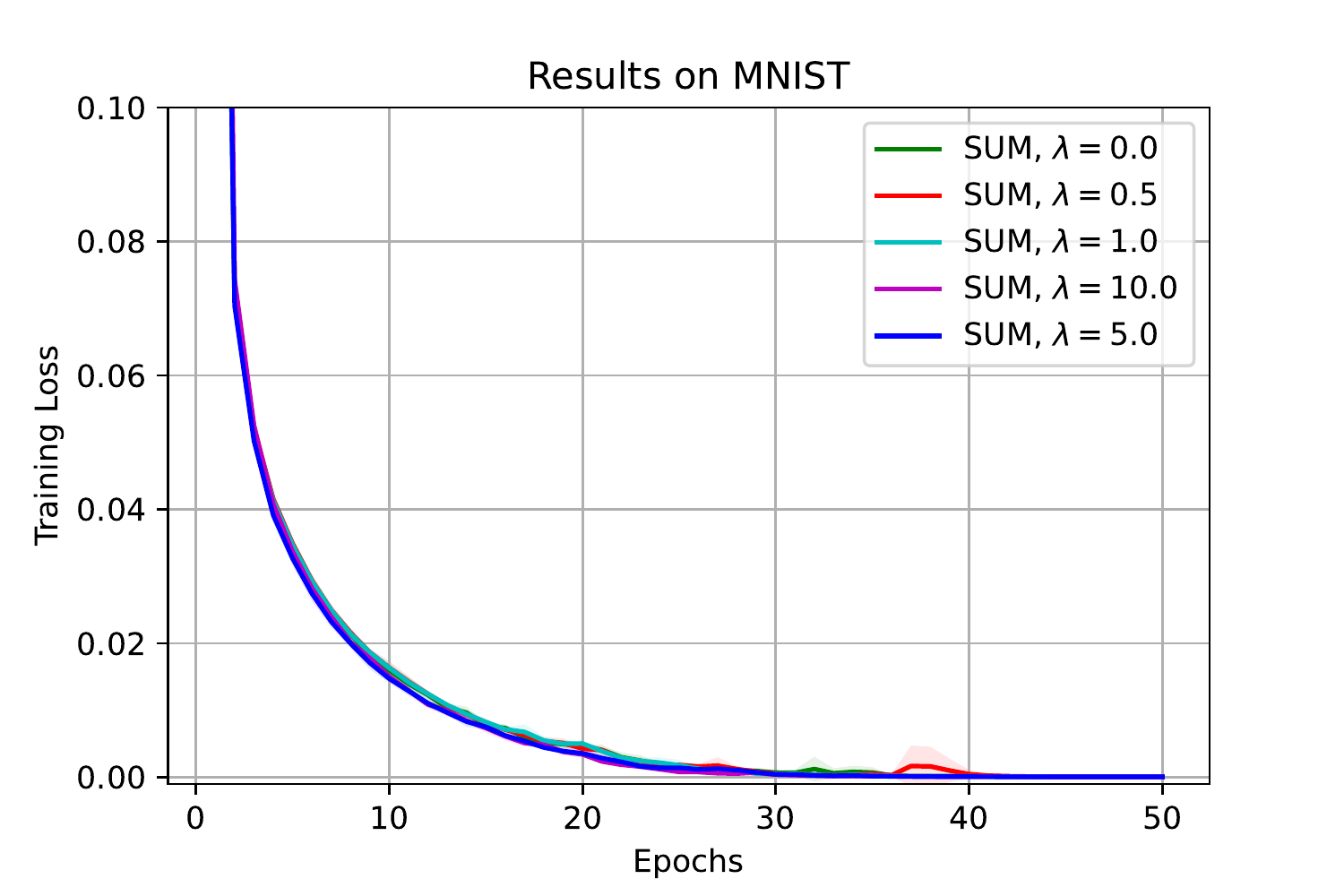}}
  \hspace{-0.2in}
  \subfigure[Test accuracy]{\includegraphics[width=2.6in]{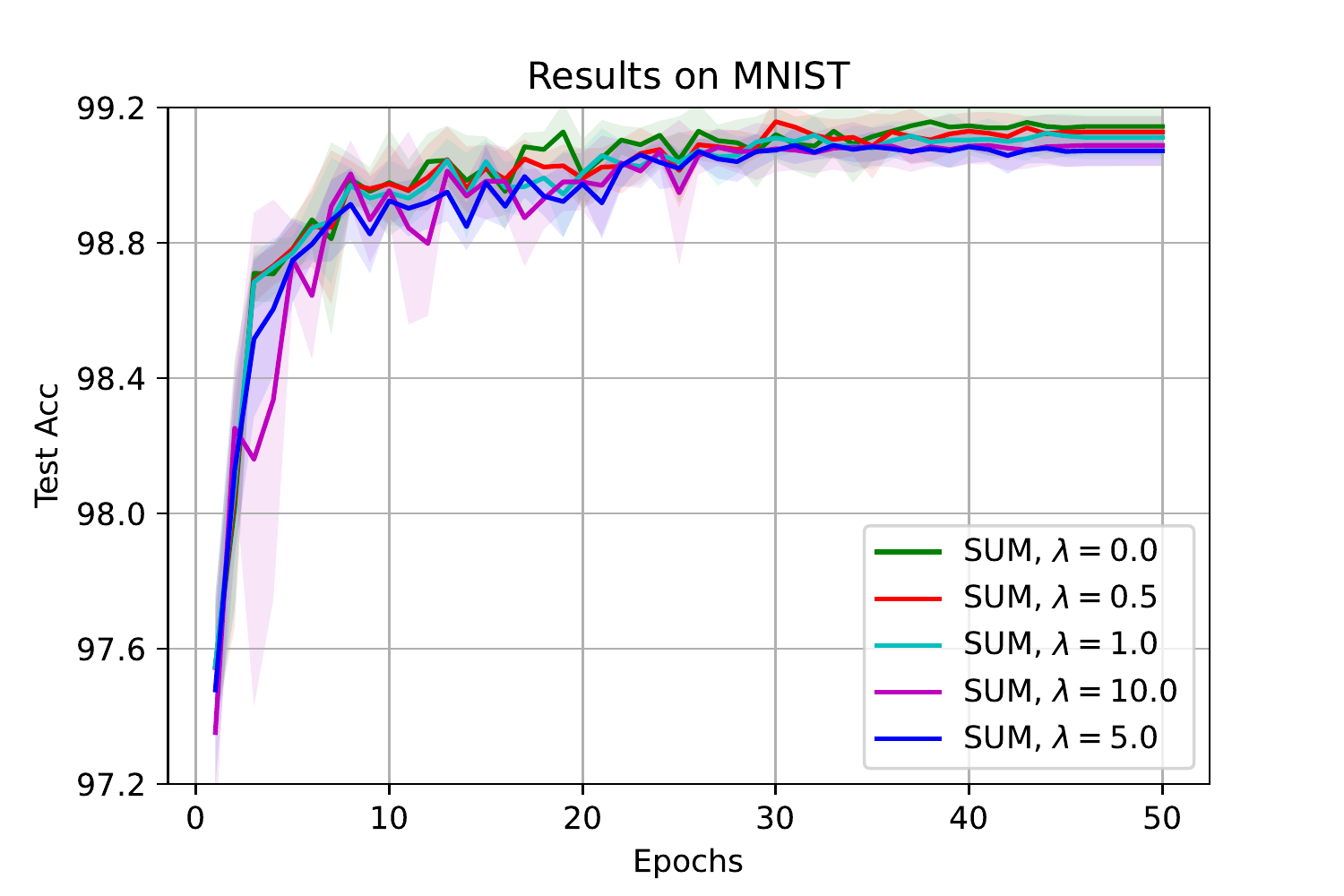}}
    \hspace{-0.2in}
  \caption{Results of LeNet-5 on MNIST.
  The continuous lines and shadow areas represent the mean and standard deviation from 5 random initializations, respectively.
  }
  \label{fig:mnist}
\end{figure}

We first trained LeNet \cite{Lenet} on the MNIST dataset using SUM method with the initial step size $\alpha=0.01$ and the momentum constant $\mu=0.9$.
In particular, we compared SUM with the interpolation factor $\lambda\in\{0, 0.5, 1.0,$ $5.0, 10.0\}$, where $\lambda=0$ corresponds to SHB, $\lambda=1$ corresponds to SNAG and
$\lambda=10$ is the maximum value of $1/(1-\mu)$ because the momentum constant was $\mu = 0.9$.
LeNet constants of 2 convolutional (Conv) and max-pooling layers and 3 fully connected (FC) layers, and the activation functions
of the hidden layers are taken as ELU functions \cite{Calinbook}. The training was on mini-batches with
128 images per batch for 50 epochs, a total of $T=23,450$ iterations\footnote{The number of iterations is equal to the number of training samples divided by the batch size, and then multiplied by the number of epochs.}, and $K$ is set to $0.9T$.
The performance of the SUM method with different interpolation factors is shown in Figure \ref{fig:mnist}.
We can see that all the variants of the SUM method had similar performance in terms of the training loss, which is consistent with the result in Theorem \ref{thm:invconv}. For the test part, we found that SUM (minimum $\lambda=0.0$) yields a higher test accuracy than other variants, while SUM (maximum $\lambda=10.0$) exhibits larger oscillations.
An interesting phenomenon is that non-increasing step size (recall \eqref{eq:noncrsstepsize}) has the effect of
smoothing on the tail of the test accuracies.

\subsection{Results on CIFAR-10 dataset}
\begin{figure}[htbp]
  \centering
  \subfigure[Training loss]{\includegraphics[width=2.6in]{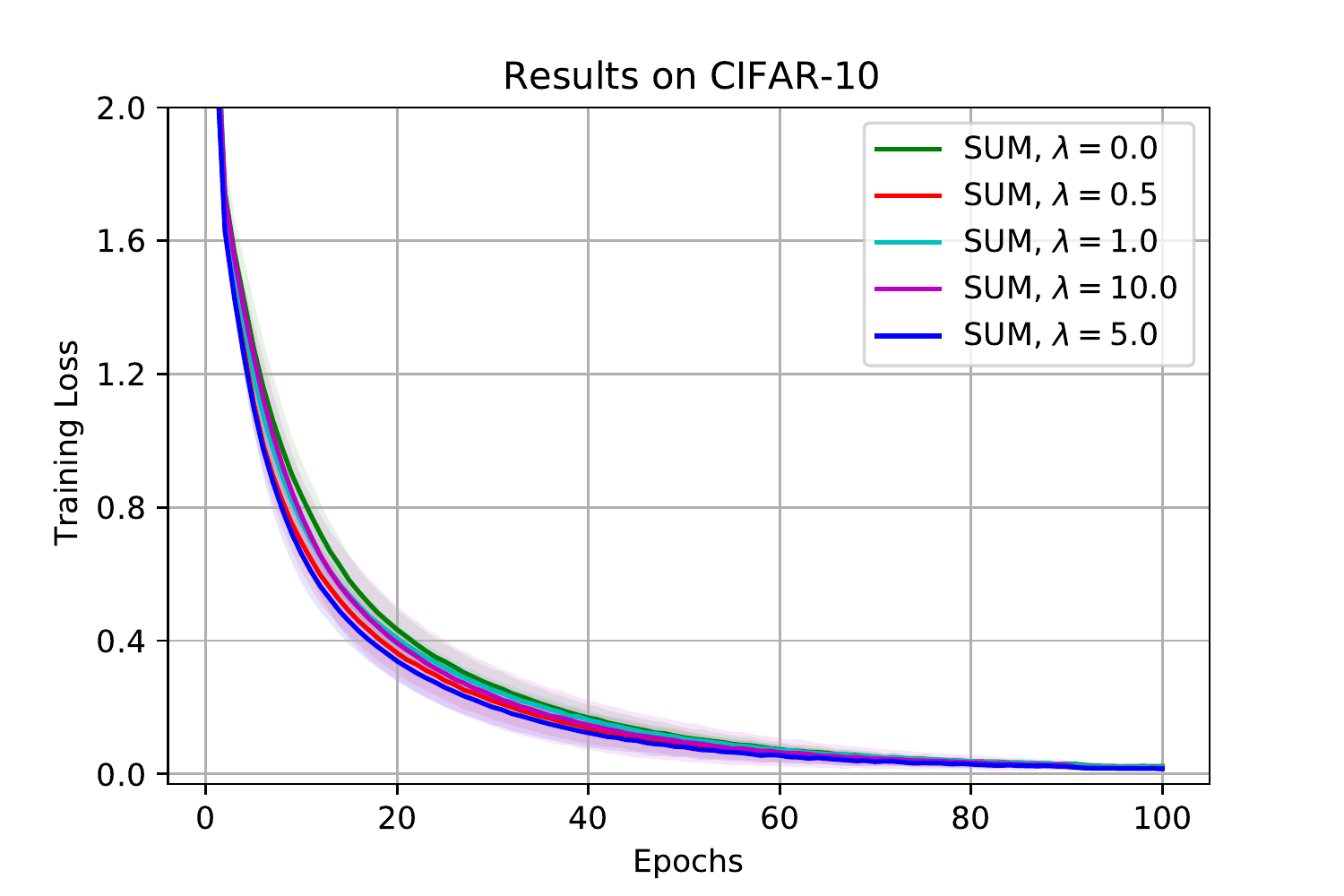}}
  \hspace{-0.2in}
  \subfigure[Test accuracy]{\includegraphics[width=2.6in]{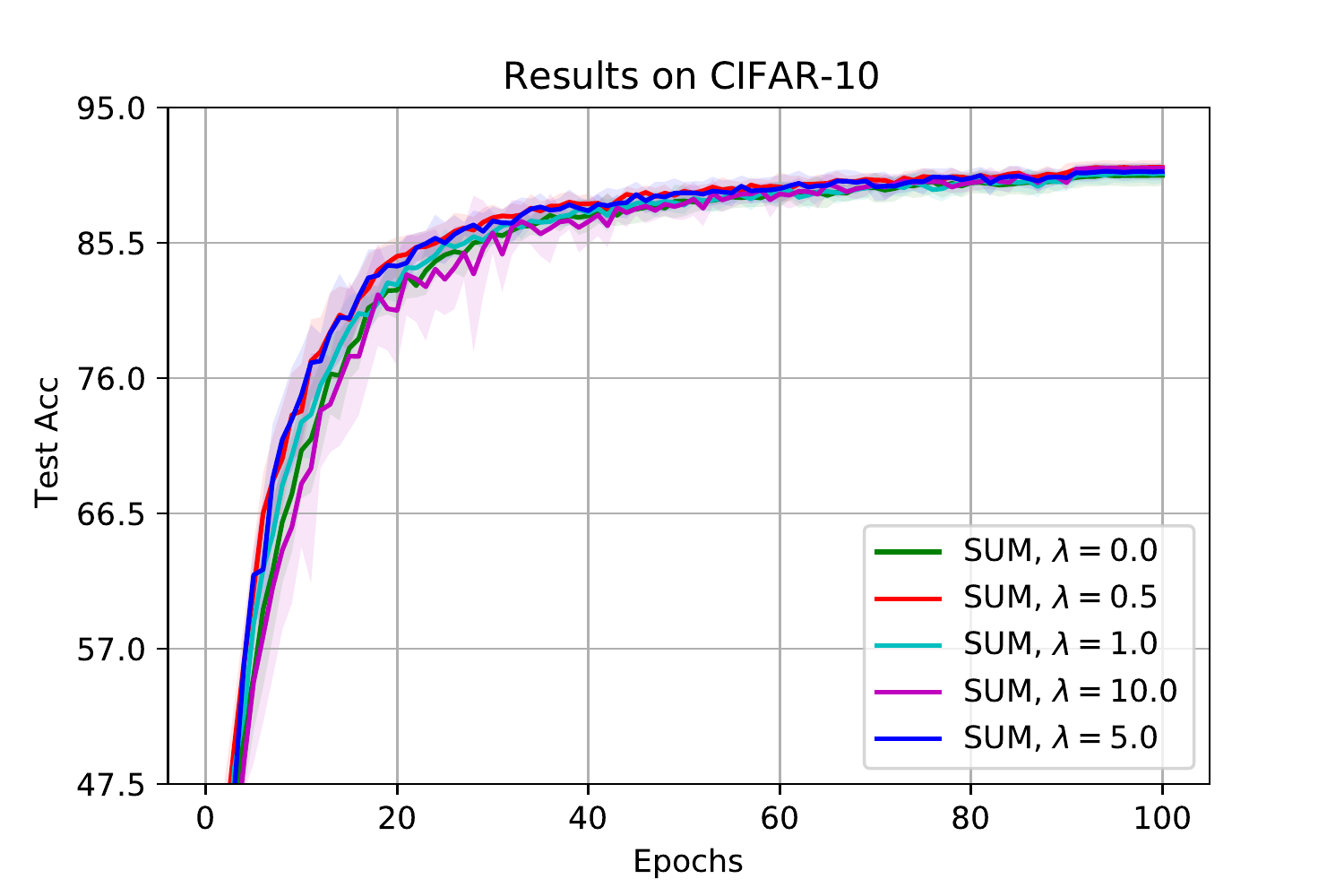}}
  \caption{Results of ResNet-18 on CIFAR-10.
  The continuous lines and shadow areas represent the mean and standard deviation from 5 random initializations, respectively.}
  \label{fig:cifar}
\end{figure}

We next verify the convergence of the SUM method
on CIFAR-10 by using ResNet-18 \cite{Hekm}.
ResNet-18 contains a Conv layer, 4 convolution blocks with [2,2,2,2] layers, one FC layer, and the activation functions
of the hidden layers are taken as ELU functions \cite{Calinbook}.
The batch size is 128. The training stage lasts for 100 epochs, a total of $T=39,100$ iterations, and $K=0.9T$.
We fix the momentum constant $\mu=0.9$ and the initial step size $\alpha=0.1$.
Now, we study the influence of $\lambda$ on convergence of the SUM method by selecting $\lambda$ from the collection set $\{0, 0.5, 1.0, 5.0, 10.0\}$.
Figure \ref{fig:cifar} illustrates the performance of the SUM with different $\lambda$ on the CIFAR-10 dataset. From
Figure \ref{fig:cifar}(a), we can observe that the convergence of the SUM method can be guaranteed on the training set, as long as the interpolation factor $\lambda\in[0,1/(1-\mu)]$, which coincides with the convergence results in Theorem \ref{thm:invconv}. Figure \ref{fig:cifar}(b) shows that the test accuracies
of SUM ($\lambda=0.5$ and $\lambda=5.0$) are comparable, followed by SUM ($\lambda=0.0$ and $\lambda=1.0$), and that SUM ($\lambda=10.0$) exhibits larger oscillations similar to the MNIST dataset. It should be noted that for theoretical consistency, the training loss was calculated after each epoch on the training set (not mini-batches), and so it was reasonable that the loss curves of the training set were monotonously decreasing.


\section{Conclusions}
We have addressed the last-iterate convergence of the stochastic momentum methods in a unified
framework, covering both SHB and SNAG. For rigour, this has been archived: \textit{i)} in a non-convex optimization setting, \textit{ii)} under the condition that the momentum coefficient is constant, and \textit{iii)} without the assumption about the boundedness of the weight norm.
Moreover, we have established that the existing minimum convergence and random selection convergence are both corollaries of last-iterate convergence, thus providing a new perspective on the understanding of the convergence of SUM method. We have also experimentally
tested the convergence of SUM on the benchmark datasets.
Experimental results verify the theoretical analysis.





\bibliographystyle{model1a-num-names}
\bibliography{<your-bib-database>}



\end{document}